\newcommand{\F}{{\mathbb{F}}}
\newcommand{\Z}{{\mathbb{Z}}}
\newcommand{\Q}{{\mathbb{Q}}}
\newcommand{\N}{{\mathbb{N}}}
\newcommand{\bG}{{\mathbf{G}}}
\newcommand{\bB}{{\mathbf{B}}}
\newcommand{\bT}{{\mathbf{T}}}
\newcommand{\bU}{{\mathbf{U}}}
\newcommand{\cS}{{\mathcal{S}}}
\newcommand{\fb}{{\underline{\mathfrak{b}}}}
\newcommand{\fe}{{\mathfrak{e}}}
\newcommand{\fu}{{\underline{\mathfrak{u}}}}
\newcommand{\Hom}{{\operatorname{Hom}}}
\newcommand{\Ind}{{\operatorname{Ind}}}
\newcommand{\St}{{\operatorname{St}}}
\newcommand{\GL}{{\operatorname{GL}}}
\renewcommand{\geq}{\geqslant}
\newtheorem{thm}{Theorem}[section]
\newtheorem{cor}[thm]{Corollary}
\newtheorem{lem}[thm]{Lemma}
\newtheorem{prop}[thm]{Proposition}
\theoremstyle{definition}
\newtheorem{exmp}[thm]{Example}
\newtheorem{rem}[thm]{Remark}
\numberwithin{equation}{section}
\begin{document}
\allowdisplaybreaks

\newcommand{\arXivNumber}{1708.07782}

\renewcommand{\thefootnote}{}

\renewcommand{\PaperNumber}{091}

\FirstPageHeading

\ShortArticleName{James' Submodule Theorem and the Steinberg Module}
\ArticleName{James' Submodule Theorem\\ and the Steinberg Module\footnote{This paper is a~contribution to the Special Issue on the Representation Theory of the Symmetric Groups and Related Topics. The full collection is available at \href{https://www.emis.de/journals/SIGMA/symmetric-groups-2018.html}{https://www.emis.de/journals/SIGMA/symmetric-groups-2018.html}}}
\Author{Meinolf GECK}
\AuthorNameForHeading{M.~Geck}
\Address{IAZ - Lehrstuhl f\"ur Algebra, Universit\"at Stuttgart,\\
Pfaffenwaldring 57, D--70569 Stuttgart, Germany}
\Email{\href{mailto:meinolf.geck@mathematik.uni-stuttgart.de}{meinolf.geck@mathematik.uni-stuttgart.de}}
\URLaddress{\url{http://www.mathematik.uni-stuttgart.de/~geckmf/}}

\ArticleDates{Received August 29, 2017, in f\/inal form November 28, 2017; Published online December 05, 2017}

\Abstract{James' submodule theorem is a fundamental result in the representation theory of the symmetric groups and the f\/inite general linear groups. In this note we consider a~version of that theorem for a general f\/inite group with a split $BN$-pair. This gives rise to a distinguished composition factor of the Steinberg module, f\/irst described by Hiss via a~somewhat dif\/ferent method. It is a~major open problem to determine the dimension of this composition factor.}

\Keywords{groups with a $BN$-pair; Steinberg representation; modular representations}

\Classification{20C33; 20C20}

\renewcommand{\thefootnote}{\arabic{footnote}}
\setcounter{footnote}{0}

\section{Introduction} \label{sec0}
Let $G$ be a f\/inite group of Lie type and $\St_k$ be the Steinberg
representation of $G$, def\/ined over an arbitrary f\/ield $k$; see~\cite{St57}.
We shall be concerned here with the case where $\St_k$ is reducible. There
is only very little general knowledge about the structure of $\St_k$ in this
case; see, e.g.,~\cite{myst,gow} and the references there. Using
the theory of Gelfand--Graev representations of~$G$, Hiss~\cite{hiss2}
showed that~$\St_k$ always has a certain distinguished composition factor
with multiplicity~$1$. It appears to be extremely dif\/f\/icult to determine
further properties of this composition factor, e.g., its dimension. The
purpose of this note is to show that this composition factor can be
def\/ined in a somewhat more intrinsic way through a version of James'
submodule theorem~\cite{Jam1}; see Remark~\ref{rem2}.

\section[Groups with a split $BN$-pair]{Groups with a split $\boldsymbol{BN}$-pair} \label{sec1}

Let $G$ be a f\/inite group and $B,N\subseteq G$ be subgroups which satisfy
the axioms for an ``algebraic group with a split $BN$-pair'' in
\cite[Section~2.5]{Ca2}. We just recall explicitly those properties of $G$ which
will be important for us in the sequel. Firstly, there is a prime number
$p$ such that we have a semidirect product decomposition $B=U\rtimes H$
where $H=B\cap N$ is an abelian group of order prime to $p$ and $U$ is a
normal $p$-subgroup of $B$. The group $H$ is normal in $N$ and $W=N/H$ is
a f\/inite Coxeter group with a canonically def\/ined generating set $S$; let
$l\colon W\rightarrow \N_0$ be the corresponding length function. For each
$w\in W$, let $n_w\in N$ be such that $Hn_w=w$. Let $w_0\in W$ be the
unique element of maximal length; we have $w_0^2=1$. Let $n_0\in N$ be
a~representative of $w_0$ and $V:=n_0^{-1}Un_0$; then $U\cap V=H$. For
$w\in W$, let $U_w:=U\cap n_w^{-1}Vn_w$. (Note that $V$, $U_w$ do not
depend on the choices of $n_0$, $n_w$.) Then we have the following sharp
form of the Bruhat decomposition:
\begin{gather*}G=\coprod_{w\in W} Bn_wU_w,\qquad \mbox{with uniqueness of expressions},\end{gather*}
that is, every $g\in Bn_wB$ can be uniquely written as $g=bn_wu$ where $
b\in B$ and $u\in U_w$. Note that, since $w_0^2=1$, we have $U_{w_0}=U$
and $Bn_0B=Bn_0U=Un_0B$, in both cases with uniqueness of expressions.

Let $k$ be any f\/ield and $kG$ be the group algebra of $G$. All our
$kG$-modules will be f\/inite-dimensional and left modules.

\begin{rem} \label{bil1} Let $\fb:=\sum\limits_{b\in B} b\in kG$. Then $kG\fb$
is a left $kG$-module which is canonically isomorphic to the induced
module $\Ind_B^G(k_B)$; here, $k_B$ denotes the trivial $kB$-module. Now
$kG\fb$ carries a natural symmetric bilinear form $\langle \;,\;\rangle
\colon kG\fb\times kG\fb\rightarrow k$ such that, for any $g,g'\in G$, we
have
\begin{gather*}\langle g\fb,g'\fb\rangle= \begin{cases} 1, & \mbox{if
$gB=g'B$},\\ 0, & \mbox{otherwise}.\end{cases}\end{gather*}
This form is non-singular and $G$-invariant. For any subset $X\subseteq
kG\fb$, we denote
\begin{gather*}X^\perp:=\{a\in kG\fb\,|\, \langle a,x\rangle=0 \mbox{ for all
$x\in X$}\}.\end{gather*}
If $X$ is a $kG$-submodule of $kG\fb$, then so is $X^\perp$.
\end{rem}

\begin{rem} \label{gelgr} Let $\sigma \colon U\rightarrow k^\times$ be a
group homomorphism and set
\begin{gather*}\fu_\sigma =\sum_{u\in U} \sigma(u)u \in kG.\end{gather*}
Then $\Gamma_\sigma:= kG\fu_\sigma$ is a left $kG$-module which is isomorphic
to the induced module $\Ind_U^G(k_\sigma)$, where $k_\sigma$ denotes the
$1$-dimensional $kU$-module corresponding to~$\sigma$. Note that
$\fu_\sigma^2=|U|\fu_\sigma$. Hence, if $|U|1_k\neq 0$, then $\Gamma_\sigma$
is a projective $kG$-module.
\end{rem}

We say that $\sigma$ is non-degenerate if the restriction of $\sigma$ to
$U_s$ is non-trivial for every $s\in S$. Note that this can only occur
if $|U|1_k\neq 0$. In the case where $G=\GL_n(q)$, the following result is
contained in James \cite[Theorem~11.7(ii)]{Jam1}; see also Szechtman
\cite[Note~4.9, Section~10]{Sz1}.

\begin{lem} \label{nlem1} Assume that $|U|1_k\neq 0$ and $\sigma$ is
non-degenerate. Then the subspace $\fu_\sigma kG \fb$ is $1$-dimensional
and spanned by $\fu_\sigma n_0\fb$. Furthermore, $\fu_\sigma n_w\fb=0$
for all $w\in W$ such that $w\neq w_0$.
\end{lem}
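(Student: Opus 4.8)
The natural strategy is to compute $\fu_\sigma n_w \fb$ directly using the sharp form of the Bruhat decomposition. First I would observe that $\fu_\sigma n_w \fb = \sum_{u \in U} \sigma(u)\, u n_w \fb$, and that $u n_w \fb = u n_w b'$-sum over $B$, which lies in the span of cosets $u n_w B$. So the key object is the orbit of $U$ (acting on the left, twisted by $\sigma$) on the coset $n_w B \in G/B$. The point is that this orbit has a large stabilizer structure coming from $U \cap n_w B n_w^{-1}$, and the twist by $\sigma$ will kill the contribution unless $\sigma$ is trivial on a certain subgroup — which, by non-degeneracy, forces $w = w_0$.

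The key steps, in order. \emph{Step 1:} Decompose $U$ with respect to the subgroup $U'_w := U \cap n_w U n_w^{-1}$ (or more precisely, using $U = (U \cap n_w^{-1} B n_w) \cdot (\text{complement})$; recall $U_w = U \cap n_w^{-1} V n_w$ and there is a companion subgroup $U'_w$ with $U = U'_w \cdot U_w$ as a product with uniqueness of expressions — this is standard for split $BN$-pairs). Write each $u \in U$ as $u = u' u_w$ with $u' \in U'_w$, $u_w \in U_w$. \emph{Step 2:} For $u' \in U'_w$ we have $n_w^{-1} u' n_w \in B$ (since $u' \in n_w B n_w^{-1} \cap U$), hence $u' n_w \fb = n_w (n_w^{-1} u' n_w) \fb = n_w \fb$ because $\fb$ is $B$-invariant on the right (indeed $b\fb = \fb$ for $b \in B$). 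So the sum over $u'$ just produces a factor $\sum_{u' \in U'_w} \sigma(u')$. \emph{Step 3:} Now $\sum_{u' \in U'_w} \sigma(u')$ is either $|U'_w|\cdot 1_k$ (if $\sigma|_{U'_w}$ is trivial) or $0$ (if $\sigma|_{U'_w}$ is non-trivial, since it is then a non-trivial character sum over a group whose order is prime to... wait — one must be careful: $\sigma$ takes values in $k^\times$, and $|U'_w|$ is a power of $p$; if $\charak k = p$ this sum is automatically $0$ unless $U'_w = 1$). \emph{Step 4:} Since $|U| 1_k \neq 0$ we have $\charak k \neq p$, so $|U'_w| 1_k \neq 0$, and the sum $\sum_{u'\in U'_w}\sigma(u')$ vanishes precisely when $\sigma|_{U'_w} \neq 1$. \emph{Step 5:} Identify when $\sigma|_{U'_w}$ can be trivial: $U'_w$ contains $U_s$ for some $s \in S$ whenever $w \neq w_0$ (because $w \neq w_0$ means $l(w_0 w) < l(w_0)$, equivalently there is a simple reflection $s$ with $l(sw) < l(w)$ or similar — concretely, $w \neq w_0$ iff there exists $s \in S$ with $w s > w$, and then $U_s \subseteq U'_w$). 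Non-degeneracy then forces $\sigma|_{U'_w} \neq 1$, hence $\fu_\sigma n_w \fb = 0$. \emph{Step 6:} For $w = w_0$: here $U'_{w_0} = 1$ (since $U_{w_0} = U$), so no cancellation, and $\fu_\sigma n_0 \fb = \sum_{u_w \in U} \sigma(u_w)\, u_w n_0 \fb$. Using $Bn_0B = Un_0B$ with uniqueness of expressions, the cosets $u n_0 B$ for $u \in U$ are all distinct, so $\fu_\sigma n_0 \fb$ is a nonzero element; combined with Steps 1–5 applied to an arbitrary element $\fu_\sigma x \fb$ for $x \in G$ (writing $x \in Bn_wB$ and absorbing the $B$-parts), one gets that $\fu_\sigma kG\fb$ is spanned by $\fu_\sigma n_0 \fb$ alone, hence $1$-dimensional.

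The main obstacle I anticipate is \emph{Step 5} — pinning down precisely which subgroup $U'_w$ is and verifying the containment $U_s \subseteq U'_w$ for a suitable $s$ when $w \neq w_0$. This is a combinatorial fact about the root subgroup structure (or its axiomatic shadow in the split $BN$-pair setting): one needs that if $w \neq w_0$ then some "positive root made negative by $w_0$" is actually kept positive by $w$, equivalently $n_w$ does not conjugate all of $U$ into $V$. I would handle this by choosing $s \in S$ with $l(ws) > l(w)$ (possible iff $w \neq w_0$) and showing $U_s = U \cap n_s^{-1} V n_s \subseteq U \cap n_w U n_w^{-1} = U'_w$ using the length-additivity $l(ws^{-1}) = l(w) - 1$... — one checks $n_w U_s n_w^{-1} \subseteq U$ via the standard commutation/Bruhat relations, which may require invoking the more detailed axioms from \cite[Section~2.5]{Ca2} beyond what is quoted in the excerpt. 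A secondary, minor obstacle is bookkeeping the right-$B$-invariance of $\fb$ and the uniqueness-of-expressions statement to conclude linear independence cleanly in Step 6; but that is routine given the sharp Bruhat decomposition already stated.
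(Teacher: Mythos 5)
Your overall strategy coincides with the paper's (decompose $U$ into a subgroup whose elements can be pushed through the Weyl-group representative into $\fb$ times a complement, factor out a character sum, kill it by non-degeneracy, and use the sharp Bruhat decomposition for $w_0$), but the combinatorial step you yourself single out as the main obstacle is stated wrongly, and as written it fails: you consistently confuse conjugation by $n_w$ with conjugation by $n_w^{-1}$. With your definition $U'_w=U\cap n_wUn_w^{-1}$ (i.e., the $u'\in U$ with $n_w^{-1}u'n_w\in U$, which is indeed what Step~2 requires to get $u'n_w\fb=n_w\fb$), the factorisation claimed in Step~1, $U=U'_w\cdot U_w$ with uniqueness, is false in general: in terms of the $U_w$-subgroups, $U'_w$ is built from the positive roots kept positive by $w^{-1}$, while $U_w$ is built from those made negative by $w$, and these sets can overlap. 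Already in type $A_2$ (e.g., $G=\GL_3(q)$), for $w$ a Coxeter element one has $U'_w\subseteq U_w$, so $U'_wU_w=U_w\subsetneqq U$. The correct companion is $U_{w^{-1}}$, i.e., $U=U_{w^{-1}}U'_w$ (Carter's Proposition~2.5.12 applied to $w^{-1}$); note also that the factor to be absorbed must stand \emph{next to} $n_w$, whereas in your $u=u'u_w$ the element $u'$ sits on the far left. Step~5 has the matching error: choosing $s$ with $l(ws)>l(w)$ gives $n_wU_sn_w^{-1}\subseteq U$, i.e., $U_s\subseteq U\cap n_w^{-1}Un_w=U_{w_0w}$, \emph{not} $U_s\subseteq U'_w$; for your $U'_w$ the correct condition is $l(sw)>l(w)$ (which also exists precisely when $w\neq w_0$). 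In the $A_2$ example above, $s$ with $ws>w$ satisfies $U_s\not\subseteq U'_w$.

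Both errors are repairable, and there are two consistent ways to do it. Either carry out your direct computation of $\fu_\sigma n_w\fb$ using $U=U_{w^{-1}}U'_w$ (with $U'_w$ adjacent to $n_w$) and $s$ with $l(sw)>l(w)$; or do what the paper does: keep the decomposition attached to $w$ itself, $U=U_wU_{w_0w}$ with $n_wU_{w_0w}n_w^{-1}\subseteq U$ and $U_s\subseteq U_{w_0w}$ for some $s$ with $l(ws)>l(w)$, which naturally computes $\fu_\sigma n_{w^{-1}}\fb=0$, and then conclude for all $w\neq w_0$ because $w\mapsto w^{-1}$ is a bijection of $W\setminus\{w_0\}$ onto itself. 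A small further remark: your hesitation in Steps~3--4 about $\charak k$ is unnecessary; if $\sigma$ is non-trivial on the relevant subgroup, pick $u_0$ there with $\sigma(u_0)\neq 1$, and multiplying the sum by $\sigma(u_0)$ permutes its terms, so the sum vanishes over any field. Step~6 is fine and agrees with the paper.
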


\begin{proof} By the Bruhat decomposition, we can write any $g\in G$ in
the form $g=un_wb$ where $u\in U$, $w\in W$ and $b\in B$. Now note that
$b\fb=\fb$ for all $b\in B$ and $\fu_\sigma u=\sigma(u)^{-1} \fu_\sigma$
for all $u\in U$. Thus, $\fu_\sigma kG \fb$ is spanned by $\{\fu_\sigma n_w
\fb \,|\, w\in W\}$. Now let $w\in W$ be such that $w\neq w_0$. We shall
show that $\fu_\sigma n_w \fb=0$. For this purpose, we use the
factorisation $U=U_wU_{w_0w}$ where $U_w\cap U_{w_0w}=\{1\}$; see
\cite[Proposition~2.5.12]{Ca2}. Since $w\neq w_0$, there exists some $s\in S$ such
that $l(ws)>l(w)$ and so $U_s\subseteq U_{w_0w}$; see \cite[Propositions~2.5.7(i) and~2.5.10(i)]{Ca2}. (Note that $U_s$ is denoted by~$X_i$ in
[{\em loc.\ cit.}].) Since $n_{w^{-1}}\in n_w^{-1}H$, we obtain
\begin{gather*}\fu_\sigma n_{w^{-1}}\fb=\fu_\sigma n_w^{-1}\fb=\bigg(\sum_{u_1\in U_w}
\sigma(u_1)u_1\bigg)n_w^{-1} \bigg(\sum_{u_2\in U_{w_0w}}\sigma(u_2)
n_wu_2n_w^{-1} \fb\bigg).\end{gather*}
By the def\/inition of $U_{w_0w}$, we have $n_wU_{w_0w}n_w^{-1}\subseteq U$
and so $n_wu_2n_w^{-1}\fb=\fb$ for every f\/ixed $u_2\in U_{w_0w}$. Hence,
we obtain
\begin{gather*} \sum_{u_2\in U_{w_0w}}\sigma(u_2) n_wu_2n_w^{-1}\fb=
\bigg(\sum_{u_2\in U_{w_0w}}\sigma(u_2)\bigg)\fb.\end{gather*}
Finally, since $U_s\subseteq U_{w_0w}$ and the restriction of $\sigma$
to $U_s$ is non-trivial, the above sum evaluates to $0$. Thus, $\fu_\sigma
n_{w^{-1}}\fb=0$ for all $w\neq w_0$. Since $w_0=w_0^{-1}$, this also
implies that $\fu_\sigma n_w\fb=0$ for all $w\neq w_0$, as required.

Hence, $\fu_\sigma kG \fb$ is spanned by $\fu_\sigma n_0 \fb$. Finally,
by the sharp form of the Bruhat decomposition, every element of $Bn_0B$
has a unique expression of the form $un_0b$ where $u\in U$ and $b\in B$.
In particular, $\fu_\sigma n_0 \fb\neq 0$ and so $\dim\fu_\sigma kG\fb=1$.
\end{proof}

\begin{cor} \label{ncor1} Assume that $|U|1_k\neq 0$ and $\sigma$ is
non-degenerate. Then the map $\varphi \colon \Gamma_\sigma\rightarrow
kG\fb$, $\gamma \mapsto \gamma n_0\fb$, is a non-zero $kG$-module
homomorphism and every homomorphism $\Gamma_\sigma\rightarrow kG\fb$ is a~scalar multiple of $\varphi$.
\end{cor}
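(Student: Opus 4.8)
The plan is to exhibit the homomorphism space $\Hom_{kG}(\Gamma_\sigma, kG\fb)$ explicitly via Frobenius reciprocity and then use Lemma~\ref{nlem1} to pin down its dimension. First I would check that $\varphi$ is a well-defined $kG$-module homomorphism. Since $\Gamma_\sigma = kG\fu_\sigma$ is generated by $\fu_\sigma$ as a left module, any $kG$-linear map out of $\Gamma_\sigma$ is determined by the image of $\fu_\sigma$; the only constraint is that this image must be annihilated on the left by whatever annihilates $\fu_\sigma$. Concretely, $u\fu_\sigma = \sigma(u)\fu_\sigma$ for all $u\in U$ (from Remark~\ref{gelgr}), so a valid target vector $v\in kG\fb$ must satisfy $uv = \sigma(u)v$ for all $u\in U$, i.e.\ $v$ must lie in the $\sigma$-isotypic subspace of $kG\fb$ for the left $U$-action. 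The map $\varphi$ corresponds to the choice $v = \fu_\sigma n_0\fb$, and one checks $u\cdot(\fu_\sigma n_0\fb) = (u\fu_\sigma)n_0\fb = \sigma(u)\fu_\sigma n_0\fb$, so $\varphi$ is indeed well defined.

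Next I would identify $\Hom_{kG}(\Gamma_\sigma, kG\fb)$ with that isotypic space. By Frobenius reciprocity, $\Hom_{kG}(\Ind_U^G k_\sigma,\, kG\fb) \cong \Hom_{kU}(k_\sigma,\, \operatorname{Res}_U kG\fb)$, and the right-hand side is exactly $\{v\in kG\fb : uv = \sigma(u)v \text{ for all } u\in U\}$. So it suffices to show this space is one-dimensional and spanned by $\fu_\sigma n_0\fb$. Here is where Lemma~\ref{nlem1} enters: if $v = \sum_{w\in W} a_w n_w\fb'$ is any element of $kG\fb$ written in terms of the Bruhat decomposition (more carefully, $v$ is a $k$-combination of elements $g\fb$ with $g$ ranging over double-coset representatives), then applying the idempotent-like projector $\tfrac{1}{|U|1_k}\fu_\sigma$ on the left — which is legitimate since $|U|1_k\neq 0$ and $\fu_\sigma^2 = |U|\fu_\sigma$ — sends $v$ to a scalar multiple of itself whenever $uv=\sigma(u)v$, namely $\fu_\sigma v = |U|1_k\, v$. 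But $\fu_\sigma v \in \fu_\sigma kG\fb$, which by Lemma~\ref{nlem1} is one-dimensional and spanned by $\fu_\sigma n_0\fb$. Hence $v$ is a scalar multiple of $\fu_\sigma n_0\fb$, proving the isotypic space is at most one-dimensional; and it is exactly one-dimensional since $\fu_\sigma n_0\fb\neq 0$ (again Lemma~\ref{nlem1}) and lies in it.

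Finally I would translate back: a homomorphism $\psi\colon\Gamma_\sigma\to kG\fb$ sends $\fu_\sigma\mapsto v$ for some $v$ in the one-dimensional isotypic space, hence $v = c\,\fu_\sigma n_0\fb$ for a scalar $c\in k$, which forces $\psi = c\varphi$ since both agree on the generator $\fu_\sigma$. Non-vanishing of $\varphi$ is immediate from $\varphi(\fu_\sigma) = \fu_\sigma n_0\fb\neq 0$.

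The main obstacle — though it has already been removed by Lemma~\ref{nlem1} — is controlling the left $\sigma$-isotypic part of $kG\fb$; the non-degeneracy of $\sigma$ is exactly what makes all Bruhat cells except the big one $Bn_0B$ contribute nothing, collapsing a potentially large space to a line. With that lemma in hand, the remaining work is the routine Frobenius-reciprocity bookkeeping sketched above, so I would keep the write-up short and lean directly on Lemma~\ref{nlem1}.
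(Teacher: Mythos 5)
Your argument is essentially the paper's: the Frobenius-reciprocity/eigenspace bookkeeping you carry out is exactly the standard identification $\Hom_{kG}(kGe,M)\cong eM$ for the idempotent $e=(|U|1_k)^{-1}\fu_\sigma$, after which everything rests on Lemma~\ref{nlem1}, just as in the paper's proof. One small slip: in fact $u\fu_\sigma=\sigma(u)^{-1}\fu_\sigma$, so the relevant eigenspace is $\{v\in kG\fb \,:\, uv=\sigma(u)^{-1}v \mbox{ for all } u\in U\}$ (with your stated relation the projection identity would read $\fu_\sigma v=\big(\sum_{u\in U}\sigma(u)^2\big)v$, which generally vanishes); inserting the inverse consistently, your proof goes through unchanged.
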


\begin{proof} The fact that $\varphi$, as def\/ined above, is a $kG$-module
homomorphism is clear; it is non-zero since $\varphi(\fu_\sigma)=\fu_\sigma
n_0\fb\neq 0$ by Lemma~\ref{nlem1}. Since $\fu_\sigma$ is a non-zero scalar
multiple of an idempotent (see Remark~\ref{gelgr}), we have $\Hom_{kG}
(\Gamma_\sigma, kG\fb) \cong \fu_\sigma kG\fb$ and this is
$1$-dimensional by Lemma~\ref{nlem1}.
\end{proof}

\section{The submodule theorem} \label{sec2}
We keep the notation of the previous section and assume throughout that
$|U|1_k\neq 0$. For any group homomorphism $\sigma\colon U \rightarrow
k^\times$, we denote by $\sigma^* \colon U\rightarrow k^\times$ the
group homomorphism given by $\sigma^*(u)= \sigma(u)^{-1}$ for all
$u\in U$. Note that, if $\sigma$ is non-degenerate, then so is $\sigma^*$.
We can now state the following version of James's ``submodule theorem''
\cite[Theorem~11.2]{Jam1}, \cite[11.12(ii)]{Jam1}.

\begin{prop}[cf.\ James \protect{\cite[Theorem~11.2]{Jam1}}] \label{othm1} Let
$\sigma\colon U\rightarrow k^\times$ be non-degenerate and consider the
submodule $\cS_\sigma:=kG\fu_\sigma n_0 \fb\subseteq kG\fb$. Then the
following hold.
\begin{itemize}\itemsep=0pt
\item[{\rm (i)}] If $M\subseteq kG\fb$ is any submodule, then either
$\cS_\sigma\subseteq M$ or $M\subseteq \cS_{\sigma^*}^\perp$.
\item[{\rm (ii)}] We have $\cS_\sigma \not\subseteq \cS_{\sigma^*}^\perp$
and $\cS_\sigma \cap \cS_{\sigma^*}^\perp \subsetneqq \cS_\sigma$ is the
unique maximal submodule of $\cS_\sigma$.
\item[{\rm (iii)}] The $kG$-module $D_\sigma:=\cS_\sigma/(\cS_\sigma
\cap \cS_{\sigma^*}^\perp)$ is absolutely irreducible and isomorphic
to the contragredient dual of $D_{\sigma^*}$.
\item[{\rm (iv)}] $D_\sigma$ occurs with multiplicity~$1$ as a
composition factor of $kG\fb$ and of $\cS_\sigma$.
\end{itemize}
\end{prop}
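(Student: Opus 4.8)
The plan is to mimic James' original argument, using the bilinear form from Remark~\ref{bil1} together with the key one-dimensionality result of Lemma~\ref{nlem1}. The crucial observation is that for any submodule $M \subseteq kG\fb$, the image $\fu_\sigma M$ is a $kU$-stable subspace of the one-dimensional space $\fu_\sigma kG\fb = k\,\fu_\sigma n_0\fb$ (Lemma~\ref{nlem1}), hence is either $0$ or all of it. This dichotomy is the engine driving everything.

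\medskip

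First I would prove~(i). Given a submodule $M \subseteq kG\fb$, consider $\fu_\sigma M \subseteq \fu_\sigma kG\fb$. If $\fu_\sigma M \neq 0$, then by Lemma~\ref{nlem1} we have $\fu_\sigma n_0\fb \in \fu_\sigma M \subseteq M$ (using that $\fu_\sigma$ is, up to the nonzero scalar $|U|1_k$, an idempotent, so $\fu_\sigma m \in M$ for $m \in M$ and $\fu_\sigma m$ spans the line), whence $\cS_\sigma = kG\fu_\sigma n_0\fb \subseteq M$. If instead $\fu_\sigma M = 0$, I claim $M \subseteq \cS_{\sigma^*}^\perp$. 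Indeed, for $m \in M$ and $g \in G$ we must show $\langle m, g\fu_{\sigma^*}n_0\fb\rangle = 0$; by $G$-invariance this equals $\langle g^{-1}m, \fu_{\sigma^*}n_0\fb\rangle$, and since $g^{-1}m \in M$ it suffices to treat $\langle m', \fu_{\sigma^*}n_0\fb\rangle$ for $m' \in M$. Now the form pairs $kG\fb$ with itself, and one checks that $\langle m', \fu_{\sigma^*} x\rangle = \langle \fu_\sigma m', x\rangle$ for all $x$: this is the adjointness $\fu_{\sigma^*} = \sum_u \sigma(u)^{-1}u$ versus $\fu_\sigma = \sum_u \sigma(u)u$ under the form for which $\{g\fb\}$ is (almost) orthonormal, using that left multiplication by $u \in U$ on $g\fb$-basis is a permutation and $\langle u\cdot g\fb, g'\fb\rangle = \langle g\fb, u^{-1}g'\fb\rangle$. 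Hence $\langle m', \fu_{\sigma^*}n_0\fb\rangle = \langle \fu_\sigma m', n_0\fb\rangle = 0$ since $\fu_\sigma m' \in \fu_\sigma M = 0$. This gives~(i).

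\medskip

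Next, (ii): we have $\fu_\sigma \cS_\sigma \ni \fu_\sigma n_0\fb \cdot (\text{scalar}) \neq 0$ — more precisely $\fu_\sigma \cdot \fu_\sigma n_0\fb = |U|1_k\,\fu_\sigma n_0\fb \neq 0$ — so by the argument above $\cS_\sigma \not\subseteq \cS_{\sigma^*}^\perp$. Then $\cS_\sigma \cap \cS_{\sigma^*}^\perp$ is a proper submodule of $\cS_\sigma$, and applying~(i) with $M$ any proper submodule of $\cS_\sigma$: since $M \subsetneq \cS_\sigma$ we cannot have $\cS_\sigma \subseteq M$, so $M \subseteq \cS_{\sigma^*}^\perp$, hence $M \subseteq \cS_\sigma \cap \cS_{\sigma^*}^\perp$; thus the latter is the unique maximal submodule. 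For~(iii), $D_\sigma := \cS_\sigma/(\cS_\sigma \cap \cS_{\sigma^*}^\perp)$ is irreducible by~(ii). To see it is absolutely irreducible one checks $\End_{kG}(D_\sigma) = k$: any endomorphism lifts to show $\Hom_{kG}(\cS_\sigma, kG\fb/\cS_{\sigma^*}^\perp)$ is controlled by $\fu_\sigma(kG\fb/\cdots)$, again one-dimensional by Lemma~\ref{nlem1}; alternatively one invokes Corollary~\ref{ncor1} applied to $\sigma$ together with the fact that $\cS_\sigma$ is the image of the map $\varphi$ there, so $\Hom_{kG}(\Gamma_\sigma, D_\sigma)$ is one-dimensional, forcing $\End_{kG}(D_\sigma) = k$ since $D_\sigma$ is a quotient of $\cS_\sigma$ which is a quotient of $\Gamma_\sigma$ (composing with $\varphi$ would otherwise give extra homomorphisms). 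The self-duality statement: the form identifies $(kG\fb)^*$ with $kG\fb$, sending $\cS_{\sigma^*}$ to... — here one uses that under the form, $\fu_\sigma \mapsto \fu_{\sigma^*}$ (the adjoint computed above), so the perp-and-quotient construction for $\sigma$ is carried to that for $\sigma^*$, giving $D_\sigma \cong D_{\sigma^*}^*$.

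\medskip

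Finally~(iv): $D_\sigma$ occurs as a composition factor of $\cS_\sigma$ (as its head) hence of $kG\fb$; for multiplicity one, suppose $D_\sigma$ appeared in some other composition series slot. Any such occurrence in $kG\fb$ produces, by projecting onto $\fu_\sigma$-fixed parts, a second dimension in $\fu_\sigma\cdot(\text{subquotient})$; more carefully, $\dim \Hom_{kG}(\Gamma_\sigma, kG\fb) = 1$ by Corollary~\ref{ncor1}, and since $\Gamma_\sigma$ is projective (Remark~\ref{gelgr}), the multiplicity of $D_\sigma$ in $kG\fb$ equals $\dim\Hom_{kG}(\Gamma_\sigma, kG\fb)/(\text{something})$ — the cleanest route is: $\Gamma_\sigma$ projective implies $[\,kG\fb : D_\sigma\,] = \dim \Hom_{kG}(P_{D_\sigma}, kG\fb)$ where $P_{D_\sigma}$ is the projective cover of $D_\sigma$, and $P_{D_\sigma}$ is a summand of $\Gamma_\sigma$ appearing once (since $\Hom(\Gamma_\sigma, D_\sigma)$ is one-dimensional and $D_\sigma$ is absolutely irreducible), so $\dim\Hom_{kG}(P_{D_\sigma}, kG\fb) \leq \dim\Hom_{kG}(\Gamma_\sigma, kG\fb) = 1$, and it is $\geq 1$ since $D_\sigma$ does occur; equality gives multiplicity exactly $1$. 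The same bound applied to $\cS_\sigma \subseteq kG\fb$ gives multiplicity $\leq 1$ there, and it is a composition factor, so exactly $1$.

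\medskip

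I expect the main obstacle to be the precise bookkeeping in~(iii)--(iv): establishing the adjointness $\langle \fu_\sigma a, b\rangle = \langle a, \fu_{\sigma^*} b\rangle$ cleanly and then leveraging projectivity of $\Gamma_\sigma$ to pin down the multiplicity to exactly one rather than just $\leq 1$ in a way that also handles the absolute irreducibility uniformly. The duality bookkeeping — tracking how the non-singular $G$-invariant form converts the submodule $\cS_{\sigma^*}$ into the annihilator defining the quotient $D_\sigma$ — is the place where sign/inverse conventions on $\sigma$ versus $\sigma^*$ must be handled with care.
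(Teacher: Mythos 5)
Your argument is essentially the paper's own proof (which in turn follows James): you prove (i) from the one-dimensionality of $\fu_\sigma kG\fb$ in Lemma~\ref{nlem1} together with the adjointness $\langle \fu_\sigma a,b\rangle=\langle a,\fu_{\sigma^*}b\rangle$, you prove (iv) from the projectivity of $\Gamma_\sigma$ and $\dim\Hom_{kG}(\Gamma_\sigma,kG\fb)=1$ (Corollary~\ref{ncor1}), and for the absolute irreducibility and duality in (iii) you sketch the standard James argument that the paper itself simply cites. The one step that needs tightening is the first assertion of (ii): from $\fu_\sigma\cS_\sigma\neq\{0\}$ you conclude $\cS_\sigma\not\subseteq\cS_{\sigma^*}^\perp$ ``by the argument above'', but your part (i) only establishes the implication $\fu_\sigma M=\{0\}\Rightarrow M\subseteq\cS_{\sigma^*}^\perp$, and what you need here is (a consequence of) its converse, which does not follow formally. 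The fix is one extra line: either compute directly, as the paper does, $\langle \fu_\sigma n_0\fb,\fu_{\sigma^*}n_0\fb\rangle=|U|1_k\neq 0$ (using that $un_0B=u'n_0B$ forces $u=u'$ by the sharp Bruhat decomposition), or note that $\langle \fu_\sigma n_0\fb,n_0\fb\rangle=1$ for the same reason, which combined with your adjointness and Lemma~\ref{nlem1} shows that $\fu_\sigma$ annihilates $\cS_{\sigma^*}^\perp$; with that supplied, the rest of your argument goes through unchanged.
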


\begin{proof} Having established Lemma~\ref{nlem1} and Corollary~\ref{ncor1},
this readily follows from the general results in \cite[Chapter~11]{Jam1};
the only dif\/ference is that James also assumes that $\cS_\sigma=
\cS_{\sigma^*}$. As our notation and setting are somewhat dif\/ferent from
those in \cite{Jam1}, we recall the most important steps of the argument.

(i) Since $M\subseteq kG\fb$, it follows from Lemma~\ref{nlem1} that, for
any $m\in M$, there exists some $c_m\in k$ such that $\fu_\sigma m=c_m
\fu_\sigma n_0\fb$. If there exists some $m\in M$ with $c_m\neq 0$, then
$\fu_\sigma n_0\fb= c_m^{-1} \fu_\sigma m\in M$ and, hence, we have
$\cS_\sigma\subseteq M$ in this case. Now assume that $c_m=0$ for all
$m\in M$; that is, we have $\fu_\sigma M=\{0\}$. Let $m\in M$ and
$g\in G$. Using the def\/inition of $\fu_\sigma$, $\fu_{\sigma^*}$ and the
$G$-invariance of $\langle\;,\; \rangle$, we obtain
\begin{gather*} \langle m,g\fu_{\sigma^*}n_0\fb\rangle=\big\langle g^{-1}m,\fu_{\sigma^*}
n_0\fb\big\rangle=\big\langle \fu_{\sigma}\big(g^{-1}m\big),n_0\fb\big\rangle=0,\end{gather*}
where the last equality holds since $\fu_\sigma M=\{0\}$.
Thus, $M \subseteq \cS_{\sigma^*}^\perp$ in this case.

(ii) For $u,u'\in U$, we have $un_0B=u'n_0B$ if and only if $u=u'$, by
the sharp form of the Bruhat decomposition. Thus, we obtain
\begin{gather*}\langle \fu_\sigma n_0\fb,\fu_{\sigma^*}n_0\fb\rangle=
\sum_{u,u'\in U} \sigma(u)\sigma^*(u')\langle un_0\fb,u'n_0\fb\rangle
=|U|1_k\neq 0,\end{gather*}
which means that $\cS_\sigma \not\subseteq
\cS_{\sigma^*}^\perp$ and so $\cS_\sigma \cap \cS_{\sigma^*}^\perp
\subsetneqq \cS_\sigma$. Now let $M\subsetneqq \cS_\sigma$ be any maximal
submodule. Then (i) immediately implies that $M=\cS_\sigma \cap
\cS_{\sigma^*}^\perp$.

(iii) By (ii), we already know that $D_\sigma$ is irreducible.
The remaining assertions then follow exactly as in the proof of
\cite[Theorem~11.2]{Jam1}.

(iv) By construction, $D_\sigma$ occurs at least once in $kG\fb$ and in
$\cS_\sigma$. Let $\varphi\colon \Gamma_\sigma \rightarrow kG\fb$ be as
in Corollary~\ref{ncor1}. Since $\Gamma_\sigma$ is projective and
$\varphi(\Gamma_\sigma)=\cS_\sigma$, some indecomposable direct summand
of $\Gamma_\sigma$ is a projective cover of $D_\sigma$ and so the desired
multiplicity of $D_\sigma$ is at most $\dim \Hom_{kG}(\Gamma_\sigma,
kG\fb)=1$, where the last equality holds by Corollary~\ref{ncor1}.
\end{proof}

We now relate the modules $\cS_\sigma$, $D_\sigma$ in
Proposition~\ref{othm1} to the Steinberg module $\St_k$ of $G$, as
def\/ined in \cite{St57}. Recall that
\begin{gather*}\St_k=kG\fe\subseteq kG\fb,\qquad \mbox{where}\quad \fe:=
\sum_{w\in W} (-1)^{l(w)}n_w\fb.\end{gather*}
We have $\dim \St_k=|U|$; a basis of $\St_k$ is given by $\{u\fe\,|\, u\in U\}$.

\begin{prop} \label{rem21} We have $\cS_\sigma=kG\fu_\sigma \fe
\subseteq \St_k$. Consequently, $D_\sigma$ is a composition factor
$($with multiplicity~$1)$ of $\St_k$.
\end{prop}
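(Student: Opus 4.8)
The plan is to show directly that $\fu_\sigma\fe$ differs from $\fu_\sigma n_0\fb$ only by an invertible scalar; this identifies $\cS_\sigma$ with $kG\fu_\sigma\fe$ and, since $\fe$ lies in the submodule $\St_k=kG\fe$, places $\cS_\sigma$ inside $\St_k$. Concretely, I would expand
\begin{gather*}
\fu_\sigma\fe=\sum_{w\in W}(-1)^{l(w)}\,\fu_\sigma n_w\fb
\end{gather*}
using the definition of $\fe$. By Lemma~\ref{nlem1} every summand with $w\neq w_0$ vanishes, so only the $w=w_0$ term survives and $\fu_\sigma\fe=(-1)^{l(w_0)}\fu_\sigma n_0\fb$. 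As Lemma~\ref{nlem1} also gives $\fu_\sigma n_0\fb\neq 0$, and the scalar $(-1)^{l(w_0)}$ is $\pm 1$, hence a unit in $k$ no matter what $\charak k$ is, we get $kG\fu_\sigma\fe=kG\fu_\sigma n_0\fb=\cS_\sigma$.

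The second point is then immediate: $\fu_\sigma\fe=\fu_\sigma\cdot\fe\in kG\fe=\St_k$, and since $\St_k$ is a $kG$-submodule of $kG\fb$ we conclude $\cS_\sigma=kG\fu_\sigma\fe\subseteq\St_k$, as claimed.

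For the ``consequently'' clause I would combine this inclusion with Proposition~\ref{othm1}(iv). Since $\cS_\sigma\subseteq\St_k\subseteq kG\fb$, the multiplicity of a fixed composition factor can only increase along this chain; by (iv) the multiplicity of $D_\sigma$ in $kG\fb$ equals $1$ and its multiplicity in $\cS_\sigma$ is at least $1$, so the multiplicity of $D_\sigma$ in $\St_k$ is sandwiched between these and hence equals~$1$.

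I do not anticipate any real obstacle here — the only things to keep in mind are that this all takes place under the standing hypothesis $|U|1_k\neq 0$ of Section~\ref{sec2} (so that $\fu_\sigma$ and non-degeneracy behave as in Lemma~\ref{nlem1}), and that one should record explicitly that $(-1)^{l(w_0)}$ is invertible so the argument is characteristic-free.
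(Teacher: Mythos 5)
Your proposal is correct and follows essentially the same route as the paper: expand $\fu_\sigma\fe$ via Lemma~\ref{nlem1} to get $\fu_\sigma\fe=(-1)^{l(w_0)}\fu_\sigma n_0\fb$, conclude $\cS_\sigma=kG\fu_\sigma\fe\subseteq kG\fe=\St_k$, and then deduce the multiplicity statement from Proposition~\ref{othm1}(iv) by the inclusion chain $\cS_\sigma\subseteq\St_k\subseteq kG\fb$. Your explicit remarks about the unit $(-1)^{l(w_0)}$ and the sandwich of multiplicities simply spell out details the paper leaves implicit.
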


\begin{proof} By Lemma~\ref{nlem1}, we have the identity
\begin{gather*}\fu_\sigma\fe=\sum_{w\in W} (-1)^{l(w)}\fu_\sigma
n_w\fb=(-1)^{l(w_0)} \fu_\sigma n_0\fb\end{gather*}
and so $\cS_\sigma=kG\fu_\sigma n_0\fb=kG\fu_\sigma\fe\subseteq kG\fe=\St_k$.
The statement about $D_\sigma$ then follows from Proposition~\ref{othm1}(iv).
\end{proof}

\begin{rem} \label{remgow} Gow \cite[Section~2]{gow} gives an explicit formula
for the restriction of the bilinear form $\langle \;,\;\rangle\colon kG\fb
\times kG\fb \rightarrow k$ (see Remark~\ref{bil1}) to $\St_k$. For this
purpose, he f\/irst works over $\Z$ and then rescales to obtain a non-zero
form over~$k$. One can also proceed directly, as follows. We have
\begin{gather*}\langle u_1\fe,u_2\fe\rangle=c_W\big(u_1^{-1}u_2\big)1_k \qquad \mbox{for any
$u_1,u_2\in U$},\end{gather*}
where $c_W(u):=|\{w\in W\,|\, n_w^{-1}un_w\in U\}|$ for $u\in U$.
Indeed, since $\langle \;,\;\rangle$ is $G$-invariant, it is enough to
show that $\langle \fe,u\fe\rangle=c_W(u)1_k$ for $u\in U$. Now, we have
\begin{gather*}\langle \fe,u\fe\rangle=\sum_{w,w'\in W} (-1)^{l(w)+l(w')} \langle n_w\fb,
un_{w'}\fb\rangle=\sum_{w\in W} \langle n_w\fb, un_w\fb\rangle,\end{gather*}
where the second equality holds since $n_wB=un_{w'}B$ if and only if $w=w'$.
Furthermore, $n_wB=un_wB$ if and only if $n_w^{-1}un_w\in B$. Since
$n_w^{-1}un_w$ is a $p$-element, the latter condition is equivalent to
$n_w^{-1}un_w\in U$. This yields the desired formula.
\end{rem}

\begin{rem} \label{rem2}
Since $|U|1_k\neq 0$, the module $\Gamma_\sigma$ in Remark~\ref{gelgr}
is projective. Also note that $\cS_\sigma=\varphi(\Gamma_\sigma)$
where $\varphi$ is def\/ined in Corollary~\ref{ncor1}. Using also
Proposition~\ref{rem21}, we conclude that
$\dim \Hom_{kG}(\Gamma_\sigma,\St_k)=\dim \Hom_{kG}(\Gamma_\sigma,kG
\fb)=1$.
So there is a unique indecomposable direct summand $P_{\St}$ of~$\Gamma_\sigma$ such that
\begin{gather*}\Hom_{kG}(P_{\St},\St_k)\neq 0.\end{gather*}
Being projective indecomposable, $P_{\St}$ has a unique simple quotient
whose Brauer character is denoted by $\sigma_G$ by Hiss \cite[Section~6]{hiss2}.
By Proposition~\ref{othm1}(iv) (and its proof), we now see that~$\sigma_G$
is the Brauer character of $D_\sigma$.
\end{rem}

\begin{rem} \label{socle} It is known that the socle of $\St_k$ is simple;
see \cite{myst,tin1}. We claim that this simple socle is contained
in $\cS_\sigma \cap \cS_{\sigma^*}^\perp$, unless $\St_k$ is irreducible.
Indeed, assume that $\cS_\sigma\cap \cS_{\sigma^*}^\perp=\{0\}$.
Then $D_\sigma\subseteq \St_k\subseteq kG\fb$ and, hence,
$D_\sigma$ belongs to the Harish-Chandra series def\/ined by the pair
$(\varnothing, k_H)$ (notation of Hiss \cite[Theorem~5.8]{hiss2}). By
Remark~\ref{rem2} and the argument in \cite[Lemma~6.2]{hiss2}, it then
follows that $[G:B]1_k\neq 0$. So $\St_k$ is irreducible by \cite{St57}.
\end{rem}

\section{Examples} \label{sec3}

We keep the setting of the previous section. We also assume now that $G$
is a true f\/inite group of Lie type, as in \cite[Section~1.18]{Ca2}. Thus,
using the notation in [{\em loc.\ cit.}], we have $G=\bG^F$ where~$\bG$
is a connected reductive algebraic group $\bG$ over $\overline{\F}_p$
and $F\colon \bG \rightarrow \bG$ is an endomorphism such that some power
of $F$ is a Frobenius map. Then the ingredients of the $BN$-pair in $G$
will also be derived from $\bG$: we have $B=\bB^F$ where $\bB$ is an
$F$-stable Borel subgroup of $\bG$ and $H=\bT^F$ where $\bT$ is an
$F$-stable maximal torus contained in $\bB$; furthermore, $N=N_{\bG}
(\bT)^F$ and $U=\bU^F$ where $\bU$ is the unipotent radical of $\bB$.
In this setting, one can single out a certain class of non-degenerate group
homomorphisms $\sigma\colon U \rightarrow k^\times$, as follows.

The commutator subgroup $[\bU,\bU]$ is an $F$-stable closed connected
normal subgroup of $\bU$. We def\/ine the subgroup $U^*:=[\bU,\bU]^F
\subseteq U$; then $[U,U] \subseteq U^*$. Furthermore, we shall f\/ix a~group homomorphism $\sigma \colon U\rightarrow k^\times$ which is a
{\em regular character}, that is, we have $U^*\subseteq \ker(\sigma)$ and
the restriction of $\sigma$ to~$U_s$ is non-trivial for all $s\in S$.
(Such characters always exist; see \cite[Section~8.1]{Ca2} and
\cite[Def\/inition~14.27]{DiMi}.) Then the corresponding module $\Gamma_\sigma=
kG\fu_\sigma$ is called a~{\em Gelfand--Graev module} for~$G$.
Let $h \in H$ and $\sigma^h\colon U \rightarrow k^\times$ be def\/ined by
$\sigma^h(u):=\sigma(h^{-1}uh)$ for $u \in U$. Then $\sigma^h$ also is a~regular character and
\begin{gather*} \fu_{\sigma^h}=\sum_{u \in U} \sigma^h(u)u=h\fu_\sigma h^{-1} \qquad
\mbox{for all $h \in H$}.\end{gather*}
Hence, right multiplication by $h^{-1}$ def\/ines an isomorphism between the corresponding Gelfand--Graev modules $\Gamma_\sigma$ and~$\Gamma_{\sigma^h}$.

\begin{rem} \label{rem3} Let $Z(G)$ be the center of $G$. Then $Z(G)
\subseteq H$ and $Z(G)=Z(\bG)^F$; see \cite[Proposition~3.6.8]{Ca2}. Assume now that
$Z(\bG)$ is connected. Then there are precisely $|H/Z(G)|$ regular
characters and they are all conjugate under the action of $H$; see
\cite[Proposition~8.1.2]{Ca2}. For any $h \in H$ we have
\begin{gather*} \fu_{\sigma^h}n_0\fb=h\fu_\sigma h^{-1} n_0\fb=h\fu_\sigma n_0h^{-1}\fb=
h\fu_\sigma n_0\fb\end{gather*}
and so $\cS_\sigma=\cS_{\sigma^h}$. It follows that $\cS_{\sigma}=
\cS_{\sigma^*}=\cS_{\sigma'}$ for all regular characters $\sigma$, $\sigma'$
of~$U$, and we can denote this submodule simply by $\cS_0$. By
Proposition~\ref{othm1}(iii), the simple module $D_0:=\cS_0/(\cS\cap
\cS_0^\perp)$ is now self-dual. Furthermore, we have
\begin{gather*}\dim \cS_0 \geq |H/Z(G)|.\end{gather*}
Indeed, we have $\fu_\sigma n_0\fb\in \cS_0$ for all regular characters
$\sigma$. Since pairwise distinct group homomorphisms $U\rightarrow k^\times$
are linearly independent, the elements $\fu_\sigma n_0\fb$ (where $\sigma$
runs over all regular characters of $U$) form a set of $|H/Z(G)|$ linearly
independent elements in $\cS_0$.
\end{rem}

\begin{exmp} \label{nexp1} Let $G=\GL_n(q)$ where $q$ is a prime power.
Then our module $\cS_0$ is $S_{\lambda}$ in James' notation
\cite[Def\/inition~11.11]{Jam1}, where $\lambda$ is the partition of $n$ with
all parts equal to~$1$. We claim that $\cS_0=\St_k$ in this case.

Indeed, by \cite[Theorem~16.5]{Jam1}, $\dim \cS_0$ is independent of the
f\/ield~$k$, as long as $\mbox{char}(k)\neq p$. Since $\St_\Q$ is irreducible,
we conclude that $\dim \cS_0=\dim \St_k$ and the claim follows.
Consequently, by Proposition~\ref{othm1}(ii), $D_0$ is the unique simple
quotient of $\St_k$. (The facts that $\cS_0=\St_k$ and that this module
has a unique simple quotient are also shown by Szechtman \cite[Section~4]{Sz1}.)
However, $\dim D_0$ may certainly vary as the f\/ield $k$ varies; see
the tables in \cite[p.~107]{Jam1}.
\end{exmp}

See \cite[4.14]{myst} for further examples where $\cS_0=\St_k$. On the
other hand, Gow \cite[Section~5]{gow} gives examples (where $G=\operatorname{Sp}_4(q)$)
where $\St_k$ does not have a unique simple quotient, and so $\cS_0
\subsetneqq \St_k$. Here is a further example.

\begin{exmp} \label{bnrk1} Let $G=\operatorname{Ree}\big(q^2\big)$ be the Ree group
of type ${^2G}_2$, where $q$ is an odd power of $\sqrt{3}$. Then $G$ has
a $BN$-pair of rank~$1$ and so $[G:B]=\dim \St_k+1$. Let $k$ be a
f\/ield of characteristic~$2$. Then $kG\fb$ and $\St_k$ have socle series
as follows:
\begin{gather*} kG\fb\colon \quad \begin{matrix} & k_G &\\& \varphi_2 & \\ \varphi_4&
\varphi_3& \varphi_5,\\ & \varphi_2 & \\ & k_G& \end{matrix}\qquad\qquad\qquad
\St_k\colon\quad \begin{matrix} & \varphi_2 & \\ \varphi_4&
\varphi_3& \varphi_5.\\ & \varphi_2 & \\ & k_G& \end{matrix}\end{gather*}
Here, $\varphi_i$ ($i=1,2,3,4,5$) are simple $kG$-modules and $\varphi_4$
is the contragredient dual of $\varphi_5$; see Landrock--Michler
\cite[Proposition~3.8(b)]{LaMi}. By Proposition~\ref{othm1}, we have
$D_0\cong \varphi_3$ and $\cS_0$ is the uniserial submodule with
composition factors $k_G$, $\varphi_2$, $\varphi_3$.
\end{exmp}

\pdfbookmark[1]{References}{ref}
\LastPageEnding

\end{document}